\newtheorem{thm}{Theorem}[section]
\newtheorem{prop}{Proposition}[section]
\newtheorem{corr}{Corollary}[section]
\newtheorem{rmrk}{Remark}[section]
\newtheorem*{Proof*}{Proof}
\newtheorem*{Theorem*}{Theorem}
\begin{document}
\baselineskip=16pt

\setcounter{tocdepth}{1}

\providecommand{\keywords}[1]
{
  \small
  \textbf{\textit{Keywords---}} #1
}


\title[]{Universal Functions for the Sharkovsky classes of maps}

\author[V. Kannan]{V. Kannan}
\address{V. Kannan,
SRM University-AP, Amaravati-522502, India.}
\email{kannan.v@srmap.edu.in}

\author[Pabitra Narayan Mandal]{Pabitra Narayan Mandal}
\address{Pabitra Narayan Mandal, School of Mathematics and Statistics, University of Hyderabad, Hyderabad 500046, India.}
\email{pabitranarayanm@gmail.com}

\footnotetext{2020 Mathematics Subject Classification. Primary 37E15, 26A18; Secondary 37C25}

\begin{abstract}
We exhibit a single interval map (called universal map) that admits all those orbit patterns which are available in the first Sharkovsky class. An interval map is said to be in the first Sharkovsky class if every periodic point of it is a fixed point. This provides a way to find universal maps in the class of contractions on interval. We also characterize all such universal maps in the first Sharkovsky class.
\end{abstract}
\keywords{Orbit-pattern; universal function; contraction map; first Sharkovsky class.}
\maketitle

\section{Introduction and Preliminaries}
The function $\sin \frac{1}{x}$ (topologist's sine curve) and its sisters $x \sin \frac{1}{x}$ and $x^2 \sin \frac{1}{x}$ are encountered frequently in Real Analysis, to provide counter examples such as:
\begin{itemize}
\item a discontinuous function with connected graph,

\item a connected planar set that is not path-connected,

\item a discontinuity of second kind,

\item a continuous function that is not of bounded variation,

\item a non-rectifiable curve,

\item a differentiable function that is not continuously differentiable,

\item a continuous function that is not uniformly continuous, 

and so on.
\end{itemize}

In this paper, we study the function $x \sin \frac{1}{x}$ from the view of topological dynamics. While doing so, we find one glaring contrast. In Real Analysis, it had a negative role of being peculiar. But in Topological Dynamics, it plays a positive role of ``synthesizing" i.e., putting all things together. 

We are able to completely describe the order-patterns of all orbits for this map. As a prelude for this, we first observe that this map is in the first Sharkovsky class $\mathcal{F}_1$ and then describe a convenient index set for describing uncountably many order-patterns of trajectories available for maps in this class.

Our result yields as a by-product that (in a sense) this is a universal map for $\mathcal{F}_1$. We next characterize all universal maps for $\mathcal{F}_1$. There are uncountably many conjugacy types of these, but these maps together have a neat description. The Section 4 concludes with some equivalent ways of this description.

If $\mathcal{F}$ is a class of dynamical systems, an element $f\in \mathcal{F}$ is said to be universal for $\mathcal{F}$ if $f$ admits all orbit patterns that are available for maps in $\mathcal{F}$. Some families $\mathcal{F}$ admit universal elements and some others do not. For instance, if $\mathcal{F}$ is the class of $t$-simple systems on $\mathbb{R}$, there is no universal element in it (see \cite{kannan}). As a note, in topological dynamics a similar idea of universality with respect to $\omega$-limit sets appeared earlier. For details, see \cite{garc}, \cite{pokl}.

Two real sequences $(a_n)_{n=0}^{\infty}$ and $(b_n)_{n=0}^{\infty}$ are said to be of the same order-pattern if $a_m<a_n$ $\iff$ $b_m<b_n$ holds for all $m,n \in \mathbb{N}_0$. An order-pattern of a sequence $(f^n(x))_{n=0}^\infty$ in a real dynamical system $(\mathbb{R},f)$ or $(I,f)$ is called an orbit-pattern. Here $I$ is a closed interval in the real line and any continuous map from $I$ to $I$ is called an interval map. We say that an interval map $f$ admits an order-pattern $(a_n)_{n=0}^{\infty}$ if $\exists $ $x\in I$ such that $(f^n(x))_{n=0}^{\infty}$ and $(a_n)_{n=0}^{\infty}$ are having the same order pattern. 

We denote by $\mathcal{F}_1$ (known as maps of first-Sharkovskii type) the set of all interval maps that do not admit a $2$-cycle. Similarly for $\mathcal{F}_n$ is the set of all interval maps that do not admit a $2^n$-cycle. We say that an orbit pattern $(x_n)$ does not force a $2$-cycle if there exists an interval map $f$ admitting $x_n$ as a trajectory such that it does not admit a $2$-cycle (in other words, $f\in \mathcal{F}_1$). The next proposition gives equivalent description (without proof) for maps in $\mathcal{F}_1$.

\begin{prop}
The following are equivalent for an interval map $f$:

1. $f$ does not admit a $2$-cycle.

2. $f$ is an anti-symmetric relation.

3. Every periodic point of $f$ is a fixed point of $f$.

4. If $x$ is between $y$ and $f(y)$, then $y$ is not between $x$ and $f(x)$, unless they are equal. 
\end{prop}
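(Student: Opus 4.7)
My plan is to run the cycle $(1) \Rightarrow (2) \Rightarrow (3) \Rightarrow (4) \Rightarrow (1)$, of which only $(1) \Rightarrow (3)$ and $(1) \Rightarrow (4)$ carry any real content. The equivalence $(1) \Leftrightarrow (2)$ is merely an unpacking of definitions: viewing $f$ as the relation $\{(a, f(a)) : a \in I\}$, antisymmetry reads ``$f(a) = b$ and $f(b) = a$ imply $a = b$'', which is exactly the absence of a $2$-cycle. $(3) \Rightarrow (1)$ is trivial since a $2$-cycle consists of non-fixed periodic points, and $(4) \Rightarrow (1)$ is the contrapositive: given a $2$-cycle $\{a, b\}$ with $a \ne b$, set $(y, x) = (a, b)$; then $x = f(y)$ and $y = f(x)$ trivially satisfy both betweenness conditions while $x \ne y$.

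For $(1) \Rightarrow (3)$, I would invoke Sharkovsky's theorem: the number $2$ is second-to-last in the Sharkovsky ordering (only $1$ follows), so every periodic orbit of period $n > 1$ forces one of period $2$. Contrapositively, the absence of a $2$-cycle implies the absence of any periodic orbit of period $> 1$, i.e., every periodic point is fixed.

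The substantive step is $(1) \Rightarrow (4)$, proved by contrapositive. Suppose $x \ne y$ witness the failure of (4); reordering if necessary, assume $y < x$, so the betweenness relations become $f(y) \ge x$ and $f(x) \le y$. Applying IVT to $f(t) - t$ on $[y, x]$ (positive at $y$ since $f(y) - y \ge x - y > 0$, negative at $x$) yields a fixed point $p \in (y, x)$ of $f$. Set $J_0 = [y, p]$ and $J_1 = [p, x]$. A second use of IVT gives $f(J_0) \supseteq J_1$ (from $f(y) \ge x$ and $f(p) = p$) and $f(J_1) \supseteq J_0$ (from $f(p) = p$ and $f(x) \le y$). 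This is a primitive length-$2$ loop in the Markov graph of $f$ through two intervals with disjoint interiors, and so by the Block--Coppel covering lemma of one-dimensional dynamics $f$ has a periodic orbit of exact period $2$, contradicting (1).

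The main obstacle is the last step: the covering lemma must furnish a \emph{genuine} period-$2$ orbit rather than merely the trivial one collapsing to the shared fixed point $p$. A short verification along the lines of the standard proof: pick $r \in J_0$ with $f(r) = x$ and $s \in J_1$ with $f(s) = y$ (both exist by IVT). Then $f^2(r) - r = f(x) - r \le y - r \le 0$ and $f^2(s) - s = f(y) - s \ge x - s \ge 0$. In the fully degenerate case $r = y$ and $s = x$, the equalities force $f(y) = x$ and $f(x) = y$, so $\{y, x\}$ is itself a $2$-cycle; in the remaining cases at least one inequality is strict, and an IVT argument for $f^2 - \mathrm{id}$ combined with tracking which of $J_0, J_1$ the image $f(q)$ lies in produces a fixed point $q$ of $f^2$ with $q < p < f(q)$, whence $\{q, f(q)\}$ is a genuine $2$-cycle.
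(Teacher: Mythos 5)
The paper states this proposition explicitly \emph{without} proof (``The next proposition gives equivalent description (without proof)\dots''), so there is no in-paper argument to compare against; your proposal has to stand on its own. Most of it does: the reduction to $(1)$ as a hub, the unpacking of $(1)\Leftrightarrow(2)$, the trivial $(3)\Rightarrow(1)$ and $(4)\Rightarrow(1)$ (reading ``between'' inclusively, which is the only reading under which $(4)$ can be equivalent to $(1)$), and the appeal to Sharkovsky's theorem for $(1)\Rightarrow(3)$ are all fine.

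The gap is in $(1)\Rightarrow(4)$, at exactly the point you flag and then wave past. Your test points give $f^2(r)\le r$ with $r\in J_0=[y,p]$ and $f^2(s)\ge s$ with $s\in J_1=[p,x]$, so the sign change of $f^2-\mathrm{id}$ straddles $p$, and the IVT only delivers a zero somewhere in $[r,s]\ni p$. Nothing you write excludes the scenario in which $f^2-\mathrm{id}$ is strictly negative on all of $[r,p)$, strictly positive on all of $(p,s]$, and vanishes in $[r,s]$ only at fixed points of $f$; ``tracking which of $J_0,J_1$ the image $f(q)$ lies in'' is not an argument, because in that scenario the only candidate $q$ \emph{is} $p$. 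Ruling this out is genuinely delicate: one has to choose $r=\max\{t\in[y,p]:f(t)\ge x\}$ and $s=\min\{t\in[p,x]:f(t)\le y\}$, take $p$ to be an extremal fixed point so that one of the two flanking intervals contains no fixed point of $f$ at all, and then show that $f^2-\mathrm{id}$ changes sign \emph{inside that fixed-point-free interval} (in the bad scenario one first deduces $f([r,p))\subseteq[p,\infty)$ and $f((p,s])\subseteq(-\infty,p)$ and works from there). This is precisely the main lemma in the standard proof that any period $>1$ forces period $2$ (Block--Coppel, Chapter II, in the paper's reference \cite{block}); it is a half-page argument, not a one-line verification. Either carry out that argument in full, or simply cite the Block--Coppel lemma ``if $f(x)\le y<x\le f(y)$ for some $y<x$, then $f$ has a periodic point of period $2$'' -- which would also be consistent with your decision to quote Sharkovsky's theorem wholesale for $(1)\Rightarrow(3)$ rather than reprove it.
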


An element $x$ is called a wall in its trajectory, if all the future terms are on the same side of it. In a sequence $(a_n)$ we note that every term is a wall, if for all $n\in \mathbb{N}$,

i) $a_{n+1}>a_n$ implies $a_m>a_n$ for all $m>n$,

ii) $a_{n+1}<a_n$ implies $a_m<a_n$ for all $m>n$,

iii) $a_n=a_{n+1}$ implies $a_{n}=a_m$ for all $m>n$.

The following result throws more light on the kind of sequences that we are studying in this paper. 

\begin{prop}\label{wall}
The following are equivalent for a (convergent) sequence $(a_n)$ in $I$, with distinct terms

1. Every term is a wall; that is, all future terms lie on one side of it. 

2. It is the union of an increasing sequence followed in its right by a decreasing sequence, where one of these two subsequences may be finite. 

3. Its terms go nearer and nearer to its limit p, when they are on the same side of p; i.e.,  $|a_m - p |$ $<$ $|a_n - p|$ for all those $m>n$ such that $a_m$ and $a_n$ are on the same side of $p$.

4. It (as an orbit pattern) does not force a 2-cycle. 
\end{prop}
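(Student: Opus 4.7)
The plan is to establish the four equivalences by proving a cycle $(1)\Rightarrow(2)\Rightarrow(3)\Rightarrow(1)$ among the real-analytic conditions, and then adding $(1)\Leftrightarrow(4)$ via the dynamical characterizations provided in Proposition 1.1.

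For the analytic equivalences, let $p=\lim a_n$ and partition the sequence into the subsequence $(b_k)$ of terms strictly below $p$ and the subsequence $(c_k)$ of terms strictly above $p$. For $(1)\Rightarrow(2)$: if $b_k$ is a wall, every later same-side term must lie on the $p$-side of $b_k$, forcing $(b_k)$ to be increasing; symmetrically $(c_k)$ is decreasing. For $(2)\Rightarrow(3)$: monotonicity of $(b_k),(c_k)$ toward the common limit $p$ gives $|a_m-p|<|a_n-p|$ whenever $m>n$ and $a_m,a_n$ lie on the same side of $p$. For $(3)\Rightarrow(1)$: fix $a_n$ with, say, $a_n<p$; any later $a_m<p$ satisfies $|a_m-p|<|a_n-p|$, hence $a_m>a_n$, while any later $a_m>p$ is trivially above $a_n$, so $a_n$ is a wall.

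For $(4)\Rightarrow(1)$: let $f\in\mathcal{F}_1$ admit $(a_n)$ as a trajectory. First observe that $f^2\in\mathcal{F}_1$, because any $2$-cycle of $f^2$ would give a $4$-periodic orbit of $f$, which by Sharkovsky's theorem forces a $2$-cycle of $f$. Next, I invoke (or re-derive, via Proposition 1.1(4) together with the intermediate value theorem applied to $f^2-\mathrm{id}$ between two adjacent local extremes of an orbit) the classical fact that every orbit of a map in $\mathcal{F}_1$ is monotone. Applied to $f^2$, this forces both subsequences $(a_{2k})$ and $(a_{2k+1})$ to be monotone with a common limit $p$, approaching $p$ from opposite sides since $f(p)=p$. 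Hence, up to relabelling, $a_{2k}\leq p\leq a_{2k+1}$; for any even $n$, every later even term exceeds $a_n$ by monotonicity and every later odd term is $\geq p\geq a_n$, so by distinctness $a_n$ is a wall (and symmetrically for odd $n$).

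For $(1)\Rightarrow(4)$: given a wall sequence with limit $p$ and monotone same-side subsequences $(b_k)\uparrow p$ and $(c_k)\downarrow p$, I construct an interval map $f\in\mathcal{F}_1$ admitting $(a_n)$ as a trajectory. Set $f(a_n)=a_{n+1}$, $f(p)=p$, interpolate linearly between consecutive orbit points when ordered by $x$-coordinate, and extend constantly outside the convex hull of the orbit. Continuity at $p$ follows from $b_k,c_k\to p$ and the fact that $f(b_k),f(c_k)\to p$. To check $f\in\mathcal{F}_1$, analyze $f^2$ on each linear piece and verify that $f^2(x)-x>0$ strictly on $[b_1,p)$ and $f^2(x)-x<0$ strictly on $(p,c_1]$, so the only fixed point of $f^2$ is $p$ and $f$ has no $2$-cycle. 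The principal obstacle is $(4)\Rightarrow(1)$: Proposition 1.1(4) gives only single-step control (an antisymmetric relation), so promoting this to monotone orbit behaviour requires iterating the antisymmetry together with the IVT to manufacture a non-fixed periodic point from any non-monotone orbit segment. A secondary delicate point in the $(1)\Rightarrow(4)$ construction is the case of non-alternating wall sequences, where the piecewise linear interpolation on each side of $p$ must be arranged so that $f^2$ gains no extra fixed points beyond $p$.
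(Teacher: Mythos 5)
The paper itself gives no proof of this proposition (it cites \cite{block} and \cite{sharkov1} for $(1)\Leftrightarrow(4)$ and explicitly omits the rest), so your attempt is judged on its own. Your cycle $(1)\Rightarrow(2)\Rightarrow(3)\Rightarrow(1)$ is correct: a wall $a_n<p$ forces every later below-$p$ term into $(a_n,p)$, which is exactly what is needed, and the return implication is routine. (One small caveat: if some term equals $p$, condition $(3)$ is vacuously satisfied at that term while $(1)$ may fail there; this cannot happen for an orbit with distinct terms, but should be excluded explicitly for abstract sequences.) Your $(1)\Rightarrow(4)$ construction is the right idea, though it is cleaner to first replace $(a_n)$ by the model orbit inside $\{\pm\frac{1}{n}\}\cup\{0\}$ with the same order pattern, as in the proof of Theorem~\ref{index}; there the interpolant satisfies $|f(x)|<|x|$ off the fixed point, so $|f^2(x)|<|x|$ and the absence of a $2$-cycle is immediate, whereas interpolating through an arbitrary wall sequence can give $|f(a_n)-p|>|a_n-p|$ and your claimed sign conditions on $f^2-\mathrm{id}$ then require a separate argument.

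The genuine gap is in $(4)\Rightarrow(1)$. The ``classical fact'' you invoke --- that every orbit of a map in $\mathcal{F}_1$ is monotone --- is false, and its falsity is the whole point of this paper: Theorem A says that \emph{every} element of $\{L,R\}^{\mathbb{N}}$ is realized as an orbit pattern of an $\mathcal{F}_1$-map, and already $f(x)=-x/2$ lies in $\mathcal{F}_1$ while having the non-monotone orbit $1,-\tfrac12,\tfrac14,\dots$. Passing to $f^2$ does not rescue the argument: for the pattern $\overline{RRLL}$ (the very pattern used in the converse part of Theorem~\ref{charact}) the even-indexed subsequence alternates around $p$, so orbits of $f^2$ are not monotone either, and your conclusion that $(a_{2k})$ and $(a_{2k+1})$ lie on opposite sides of $p$ would wrongly confine $\mathcal{F}_1$ to alternating patterns. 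What must be proved is only the much weaker wall property: assuming some $a_n$ has later terms on both sides of it, say $a_{m_1}<a_n<a_{m_2}$ with $m_1,m_2>n$, one manufactures a periodic point of period $2$ by a covering/intermediate-value argument. That is precisely the nontrivial content of the results cited in \cite{block} and \cite{sharkov1}; there is no route to it through monotonicity of orbits, so this implication remains unproved in your proposal.
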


The equivalence of $(1)$ and $(4)$ has been proved in \cite{block}, \cite{sharkov1}. Other implications among the above can be proved but we omit the proof. We also mention (without proof) a well known result which we are going to use in successive sections.
\begin{thm}(\cite{block}) \label{exact}
If $I$ and $J$ are closed intervals such that $f(I)$ contains $J$, then there is a closed subinterval $K$ of $I$ such that $f(K) = J$.
\end{thm}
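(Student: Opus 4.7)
The plan is to produce $K$ by a two-step ``trimming'' construction: first locate the boundary values $c = \min J$ and $d = \max J$ as images of points in $I$, then shrink the candidate interval from each side in the correct order so that the trimmed interval maps exactly onto $J$, not merely onto a superset of $J$.

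First I would write $J = [c,d]$ and use $f(I) \supseteq J$ together with the intermediate value theorem to pick $p, q \in I$ with $f(p) = c$ and $f(q) = d$. Replacing $f$ by its composition with an orientation-reversal of $I$ if necessary, I may assume $p < q$. Next I would define
\[
q' = \inf \{\, x \in [p,q] : f(x) = d \,\}.
\]
Continuity forces $f(q') = d$, and since $f(p) = c < d$ and $f \neq d$ on $[p, q')$, the intermediate value theorem (applied to any would-be down-crossing) gives $f(x) < d$ for every $x \in [p, q')$; hence $f \le d$ on $[p, q']$.

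Then I would set
\[
p' = \sup \{\, x \in [p, q'] : f(x) = c \,\}.
\]
Again by continuity $f(p') = c$, and since $f(q') = d > c$ while $f \neq c$ on $(p', q']$, the same IVT argument forces $f(x) > c$ for every $x \in (p', q']$; hence $f \ge c$ on $[p', q']$. Combining the two one-sided bounds yields $f([p', q']) \subseteq [c,d] = J$.

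Finally I would set $K = [p', q']$. We have $f(p') = c$ and $f(q') = d$, so the intermediate value theorem gives $f(K) \supseteq [c,d]$; the preceding step gives the reverse inclusion, so $f(K) = J$. The only real obstacle is making sure the trimmed endpoints do not themselves push $f$ outside $[c,d]$, which is precisely why the \emph{infimum} for $d$ must be taken before the \emph{supremum} for $c$ (so that the search for $p'$ happens on an interval where $f \le d$ is already guaranteed).
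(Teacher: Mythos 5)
The paper states this lemma without proof (it is quoted from Block--Coppel \cite{block}), so there is no internal argument to compare against; your proposal is a correct and complete proof, and it is the standard one. Each trimming step is sound: the sets $\{x\in[p,q]:f(x)=d\}$ and $\{x\in[p,q']:f(x)=c\}$ are nonempty and closed, so the infimum and supremum are attained with $f(q')=d$ and $f(p')=c$, and your one-sided intermediate value arguments correctly give $f\le d$ and then $f\ge c$ on $K=[p',q']$, whence $f(K)=J$; doing the $d$-trim before the $c$-trim is indeed the right order. The only point left implicit is the degenerate case $c=d$, where one simply takes $K$ to be a single preimage point.
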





The main theorems proved here are the following:

\begin{Theorem*} [A]
The set $\mathbb{P}$ of all orbit-patterns available in $\mathcal{F}_1$ can be naturally indexed by the set $J=\{L,R\}^\mathbb{N}\cup \{L,R\}^*$.
\end{Theorem*}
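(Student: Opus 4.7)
The plan is to construct an explicit natural bijection $\Phi : \mathbb{P} \to J$ and verify well-definedness, injectivity, and surjectivity in turn.

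First I would invoke the standard fact that every trajectory of any $f \in \mathcal{F}_1$ on a compact interval converges to a fixed point $p$, a consequence of every periodic point being fixed. Combined with Proposition \ref{wall}, this means a trajectory is either (i) an infinite sequence of distinct terms approaching $p$ from both sides, or (ii) a finite non-constant stretch followed by a constant tail at $p$. This suggests defining $\Phi$ on a representative $(a_n)$ by recording, at each $n$ with $a_n \neq p$, the symbol $L$ if $a_n < p$ and $R$ if $a_n > p$, and terminating the record at the first index (if any) where $a_n = p$. The resulting word lies in $J = \{L,R\}^{\mathbb{N}} \cup \{L,R\}^*$, with finite (possibly empty) words corresponding exactly to eventually-fixed trajectories.

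Next I would verify that $\Phi$ descends to orbit-patterns. The key point is that each symbol is an invariant of the order-pattern: by the wall property, index $n$ is of type $L$ iff $a_n < a_m$ for all $m > n$, of type $R$ iff $a_n > a_m$ for all $m > n$, and is a ``stop'' iff $a_n = a_m$ for all $m \geq n$. Two order-equivalent trajectories therefore yield the same word, so $\Phi$ is well-defined. Injectivity reduces to a short case analysis: the word reconstructs every pairwise comparison $a_m$ vs.\ $a_n$ from the pair $(s_m, s_n)$, using that the $L$-subsequence is strictly increasing, the $R$-subsequence is strictly decreasing, and every $L$-term lies below every $R$-term and every stop.

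For surjectivity, given $w \in J$, I would manufacture an explicit realizing sequence in $[0,1]$. Fix $p = 1/2$, enumerate the $L$-positions of $w$ in order as $n_1 < n_2 < \cdots$ and the $R$-positions as $m_1 < m_2 < \cdots$, and set $a_{n_i} = p - 2^{-i}$, $a_{m_j} = p + 2^{-j}$, and $a_n = p$ at the remaining positions. Every term is a wall by direct inspection, so by the equivalence $(1) \Leftrightarrow (4)$ of Proposition \ref{wall} there exists $g \in \mathcal{F}_1$ admitting $(a_n)$ as an orbit; clearly $\Phi$ sends its orbit-pattern to $w$.

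The main obstacle, as I see it, is the initial structural statement that every trajectory in $\mathcal{F}_1$ fits the clean ``$L$-subsequence approaches $p$ from the left, $R$-subsequence approaches $p$ from the right, with possible termination at $p$'' template; everything else is bookkeeping. This content is carried by the wall property of Proposition \ref{wall}.
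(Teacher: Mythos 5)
Your proposal is correct, and its skeleton --- define the labeling map, check each label is an order-pattern invariant, reconstruct every pairwise comparison for injectivity, realize an arbitrary word for surjectivity --- coincides with the paper's proof of Theorem \ref{index}. Two points of genuine divergence are worth recording. First, for surjectivity you write down the realizing sequence $a_{n_i}=p-2^{-i}$, $a_{m_j}=p+2^{-j}$ and then invoke the equivalence $(1)\Leftrightarrow(4)$ of Proposition \ref{wall} to obtain a map $g\in\mathcal{F}_1$ admitting it as a trajectory; the paper instead builds the map explicitly, placing the orbit on $\{\pm\frac{1}{n}\}\cup\{0\}$, joining consecutive points $(x_n,x_{n+1})$ by line segments, and checking $|f(x)|<|x|$ off $0$ to rule out $2$-cycles. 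Your route is shorter but outsources the existence of the map to the cited Block--Coppel/Sharkovsky equivalence, whereas the paper's construction is self-contained and is reused (it supplies a canonical representative of each pattern inside $\{\pm\frac{1}{n}\}\cup\{0\}$). Note also that Proposition \ref{wall} is stated for convergent sequences \emph{with distinct terms}, so for a finite word your constant tail at $p$ falls outside its literal hypotheses; you should dispose of the eventually-fixed case by a separate (trivial) direct construction. Second, your convention is the mirror image of the paper's: there $L$ means ``moves to the left,'' which by Proposition \ref{sign} is equivalent to $x_i>p$, while you attach $L$ to terms below $p$. This is harmless for producing a bijection onto $J$ (compose with the involution swapping $L$ and $R$), but it should be reconciled if your indexing is to match the one used in the proofs of Theorems B and C.
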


This is reminiscent of the theory of continued fractions where every real number has a tag from $\mathbb{N}^{\mathbb{N}}\cup \mathbb{N}^*$.

\begin{Theorem*} [B]
The function $f(x)=r x \sin \frac{1}{x}$ for some $0<r<1$ is an $\mathcal{F}_1$-map on $I=[-1,1]$ with the following universal property: If $g$ is any $\mathcal{F}_1$-map on $[-1,1]$, for all $p\in I$, $\exists q\in I$ such that $(g^n(p))_{n=0}^{\infty}$ and $(f^n(q))_{n=0}^{\infty}$ are of the same order-pattern.
\end{Theorem*}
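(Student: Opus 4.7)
The plan divides into two claims: (i) $f(x)=rx\sin(1/x)$ (with $f(0):=0$) lies in $\mathcal{F}_1$ on $[-1,1]$, and (ii) $f$ realizes every orbit-pattern available to $\mathcal{F}_1$-maps on $[-1,1]$.

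Claim (i) is essentially a calculation: since $|\sin(1/x)|\le 1$, we have $|f(x)|\le r|x|\le r<1$, so $f$ maps $[-1,1]$ into itself, and $|f^n(x)|$ strictly decreases to $0$ whenever $x\ne 0$. Hence $0$ is the only periodic point of $f$ and is already a fixed point, which by the equivalences in the first proposition places $f$ in $\mathcal{F}_1$.

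To prove (ii), let $g\in\mathcal{F}_1$ and $p\in[-1,1]$. By Theorem A the orbit-pattern of $(g^n(p))$ is encoded by a tag $s\in\{L,R\}^{\mathbb{N}}\cup\{L,R\}^*$ recording, at each step, on which side of the limit the iterate lands; this encoding is natural because Proposition~\ref{wall} tells us that terms on the same side of the limit approach it monotonically, so the order pattern is determined by this side sequence. Since every $f$-orbit converges to the unique fixed point $0$, it suffices to show that for every such $s$ there exists $q\in[-1,1]$ whose $f$-orbit has side sequence $s$ relative to $0$.

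The decisive local feature of $f$ is that $\sin(1/x)$ oscillates between $\pm 1$ on every one-sided neighborhood of $0$, so for any half-interval $J=(0,\delta]$ or $[-\delta,0)$ the image $f(J)$ contains a full two-sided neighborhood of $0$. Combining this with Theorem~\ref{exact}, I construct nested closed intervals $K_0\supset K_1\supset\cdots$ and closed intervals $J_n\subset I_{s_n}\setminus\{0\}$ with $f^n(K_n)=J_n$, writing $I_L:=[-1,0]$ and $I_R:=[0,1]$. Inductively, given $J_n$ chosen close enough to $0$ that $f$ oscillates on it, pick a small $J_{n+1}\subset I_{s_{n+1}}$ lying inside the neighborhood of $0$ contained in $f(J_n)$; Theorem~\ref{exact} yields $J_n'\subset J_n$ with $f(J_n')=J_{n+1}$, and, applied to the continuous surjection $f^n:K_n\to J_n$, it produces $K_{n+1}\subset K_n$ with $f^n(K_{n+1})=J_n'$, so $f^{n+1}(K_{n+1})=J_{n+1}$. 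By compactness $\bigcap_n K_n\ne\emptyset$, and any $q$ in it satisfies $f^n(q)\in J_n\subset I_{s_n}$ for all $n$, realizing the desired side sequence. A finite pattern of length $m$ is handled by the same induction for $n<m$ followed by one pullback into $f^{-m}(0)$, using that the zeros of $f$ (points $1/(k\pi)$) accumulate at $0$.

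The principal obstacle is preserving the ``oscillation-rich'' property: at every stage $J_n$ must contain points $x$ with $\sin(1/x)=\pm 1$ so that $f(J_n)$ genuinely surrounds $0$ and the induction can proceed. Since the extremal points $x=1/(\tfrac{\pi}{2}+2k\pi)$ and $x=1/(\tfrac{3\pi}{2}+2k\pi)$ accumulate at $0$, the endpoint of $J_{n+1}$ closest to $0$ can always be chosen small enough to keep the property alive, so the induction never stalls and the nested-intersection argument closes the proof.
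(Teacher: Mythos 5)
Your proposal is correct and follows essentially the same route as the paper's proof of Theorem \ref{motivation}: use the oscillation of $\sin\frac{1}{x}$ near the unique fixed point $0$ to produce closed intervals whose images cover a neighbourhood of $0$, pull them back with Theorem \ref{exact} into a nested sequence of intervals prescribing the side of each iterate, and intersect, finishing finite words by landing on a preimage of $0$. The only (harmless) deviation is that you arrange each $f(J_n)$ to contain a two-sided neighbourhood of $0$, which spares you the paper's implicit one-step look-ahead in choosing whether $f(J_k)$ lies above or below $0$.
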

 
\begin{Theorem*} [C]
The following are equivalent for $f\in \mathcal{F}_1$:

a) $f$ is universal in $\mathcal{F}_1$ i.e., all orbit-patterns that are available for any $g$ in $\mathcal{F}_1$ are available for this $f$.

b) $f$ admits a fixed point $p$ for which arbitrarily near $p$, on both sides of $p$, the values taken by $f$ swing both above $p$ and below $p$.
\end{Theorem*}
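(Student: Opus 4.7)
The plan is to prove both implications by working symbolically with the index set $J = \{L,R\}^\mathbb{N} \cup \{L,R\}^*$ supplied by Theorem~A.

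For $(b) \Rightarrow (a)$, assume $f$ has a fixed point $p$ as in (b). The first step is a \emph{preimage lemma}: by (b), for every $\eta > 0$ there exist $x_1, x_2 \in (p - \eta, p)$ with $f(x_1) > p > f(x_2)$, so $f([x_1, x_2])$ is a closed interval containing a two-sided neighborhood $[p - \delta, p + \delta]$ of $p$, and together with Theorem~\ref{exact} this shows that every $y$ sufficiently close to $p$ has a preimage $x$ in $(p - \eta, p)$; the analogous statement holds on the right of $p$. Given any infinite pattern $w = w_0 w_1 w_2 \cdots$, I would inductively build closed intervals $J_n$ so that (i) $J_n$ lies strictly on side $w_n$ of $p$, (ii) $J_n$ is itself an ``oscillation interval'' in the sense that $f(J_n) \supset [p - \delta_n, p + \delta_n]$ for some $\delta_n > 0$, and (iii) $J_{n+1} \subset f(J_n)$, while $\operatorname{diam}(J_n) \to 0$. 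The key move is to pick $J_{n+1}$ as an oscillation interval on side $w_{n+1}$, which (b) furnishes arbitrarily close to $p$, choosing it small enough to fit inside $[p - \delta_n, p + \delta_n]$. Iterating Theorem~\ref{exact} produces nested closed intervals $K_N \subset J_0$ with $f^N(K_N) = J_N$; any $q \in \bigcap_N K_N$ satisfies $f^n(q) \in J_n$, so $f^n(q) \to p$ while remaining strictly on side $w_n$, and hence $(f^n(q))$ realizes the pattern $w$. Finite patterns are handled identically, using $p$ as the ultimate backward target (which has preimages on both sides arbitrarily near $p$ by IVT applied to the oscillation points).

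For $(a) \Rightarrow (b)$, I would single out a pattern whose realization forces (b); the simplest choice is $w = LLRR\,LLRR\,LLRR\cdots \in \{L,R\}^\mathbb{N}$. By Theorem~A this pattern is available in $\mathcal{F}_1$, so by universality $f$ admits it via some orbit $(q_n)$ converging to a fixed point $p$. Proposition~\ref{wall} implies that the $L$-subsequence $q_0, q_1, q_4, q_5, q_8, q_9, \ldots$ increases monotonically to $p$, whence each of the interlocking sub-subsequences $(q_{4k})_{k \geq 0}$ and $(q_{4k+1})_{k \geq 0}$ accumulates at $p$ from the left. The transitions $q_{4k} \to q_{4k+1}$ are $LL$ and give $f(q_{4k}) < p$, while $q_{4k+1} \to q_{4k+2}$ are $LR$ and give $f(q_{4k+1}) > p$. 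Hence in every left-neighborhood of $p$ the map $f$ takes values both below and above $p$; the mirror statement on the right of $p$ follows from the $RR$- and $RL$-transitions along the $R$-subsequence. This is precisely (b) at $p$.

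The main obstacle I anticipate is in the $(b) \Rightarrow (a)$ direction: one must guarantee that the orbit of any $q \in \bigcap_N K_N$ remains strictly on each prescribed side $w_n$ rather than landing on $p$ and becoming eventually constant (which would only realize a finite pattern). This is why $J_n$ is demanded strictly on side $w_n$ with positive distance from $p$, and why the inductive step insists on $J_{n+1}$ being an oscillation interval (so that $f(J_{n+1})$ still covers a two-sided neighborhood of $p$ and the recursion continues), rather than being an arbitrary subinterval of $f(J_n)$. Both requirements use (b) in its full strength---swings above and below $p$ on both sides of $p$ and arbitrarily near $p$---which is what makes (b) a sharp sufficient condition for universality.
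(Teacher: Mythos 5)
Your proof is correct and follows the same global strategy as the paper (nested intervals via Theorem~\ref{exact} realizing a prescribed $\{L,R\}$-itinerary for one direction; extracting a four-periodic pattern for the other), but the mechanism in the $(b)\Rightarrow(a)$ direction is genuinely different and simpler. The paper (Theorem~\ref{charact}) anchors each interval $J_k$ at a carefully chosen preimage $b_{m_k}$ of $p$, where the residue of $m_k$ modulo $4$ must be selected by looking \emph{two} symbols ahead ($s_{k+1}$ and $s_{k+2}$), because $f(J_k)$ is a one-sided interval $[p,p+\delta_{k+1}]$ or $[p-\delta_{k+1},p]$ and its side must be pre-committed; the paper also confines every $J_k$ to the set $\mathcal{S}=\{x:|f(x)-p|<|x-p|\}$ so that the motion is unilateral. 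You instead make every $J_n$ an ``oscillation interval'' whose image covers a \emph{two-sided} neighbourhood of $p$, which eliminates both the lookahead and the set $\mathcal{S}$: the side of $J_{n+1}$ can be chosen on the spot, and the correctness of the labels is delegated to Proposition~\ref{sign} applied to the limiting orbit rather than enforced step by step. The paper's version buys explicit control of each step (useful for seeing why $x\sin\frac1x$ works); yours buys a shorter induction. Your $(a)\Rightarrow(b)$ argument is the paper's verbatim, up to using $\overline{LLRR}$ in place of $\overline{RRLL}$.

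Two small points to tighten. First, ``$\operatorname{diam}(J_n)\to 0$'' is not the condition you actually need: you need $J_n\subset[p-\epsilon_n,p+\epsilon_n]$ with $\epsilon_n\to 0$, i.e.\ the $J_n$ shrink \emph{to $p$}. Otherwise, for a one-sided pattern such as $L^{\infty}$ the orbit of $q$ could converge to a different fixed point $p'\neq p$, and Proposition~\ref{sign} would then assign labels relative to $p'$, not $p$. This is trivially arranged since $(b)$ supplies oscillation points arbitrarily near $p$, but it must be said. Second, you use the convention that $L$ means ``left of $p$,'' whereas the paper's bijection $\phi$ of Theorem~\ref{index} (via Proposition~\ref{sign}) has $L$ meaning $x_i>p$ (the point \emph{moves} left); since $(b)$ is symmetric in the two sides this costs nothing mathematically, but your itinerary construction and your appeal to $\phi$ must use one consistent convention.
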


\section{Enlisting orbit-patterns in $\mathcal{F}_1$}
Let $(x_n)$ be such that $x_n=f(x_{n-1})$ where $n\geq 2$, for some $f\in \mathcal{F}_1$. Then $x_n\to p$ for some $p$ and $f(p)=p$. It is important to note that if some $x_k=p$, then $x_i=p$ for $i\geq k$.

Label every term of $(x_n)$ whenever $x_n \neq p$ with $L$ or $R$  according as it moves to its left or right. When $k$ is the least natural number such that $x_k=p$ then label upto $k-1$-th term of $(x_n)$ with $L$ or $R$ according as it moves to its left or right. Moreover it is important to note the following result.

\begin{prop}\label{sign}
Let $(x_n)$ be an orbit of an $\mathcal{F}_1$-map such that $x_n\to p$, then  
 
 $x_i$ is labeled as $L$ if and only if $x_i>p$ and 
 
 $x_i$ is labeled as $R$ if and only if $x_i<p$. 
\end{prop}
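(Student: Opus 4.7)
The plan is to read the proposition directly off the wall property for $\mathcal{F}_1$-orbits established in Proposition \ref{wall}. As a preliminary, if the orbit is eventually equal to $p$, say $x_k=p$ with $k$ least, I would observe that the pre-$p$ terms $x_0,\dots,x_{k-1}$ are pairwise distinct (otherwise some $x_i$ with $i<k$ would be periodic, hence a fixed point since $f\in\mathcal{F}_1$, hence equal to the limit $p$, contradicting $i<k$), and each is distinct from the constant tail. Thus every labeled term $x_i$ is distinct from every other term of the orbit, and Proposition \ref{wall} applies (to the truncated or full orbit as appropriate) to give that $x_i$ is a wall in its trajectory: all $x_m$ with $m>i$ lie strictly on one side of $x_i$.

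For the forward direction, suppose $x_i$ is labeled and $x_i>p$. By the wall property, the future iterates lie strictly on one side of $x_i$. They cannot all lie above $x_i$, since $\lim_{n}x_n=p<x_i$; hence they all lie strictly below $x_i$, so in particular $x_{i+1}<x_i$, which is precisely the label $L$. The case $x_i<p$ is symmetric and yields label $R$.

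For the converses, I would note that labels are assigned only when $x_i\neq p$, so each labeled term satisfies exactly one of $x_i>p$ or $x_i<p$; the two forward implications then force the corresponding labels $L$ and $R$ disjointly. Hence $x_i$ is labeled $L$ iff $x_i>p$ and $R$ iff $x_i<p$. The only subtle point I anticipate is the bookkeeping in the eventually-$p$ case outlined above, which justifies invoking Proposition \ref{wall} cleanly despite the technical distinct-terms hypothesis there; modulo that, the proposition is a direct translation of the wall property into the labeling convention.
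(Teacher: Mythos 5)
Your proof is correct and takes essentially the same route as the paper: both read the statement off the wall property of Proposition \ref{wall} together with $x_n\to p$ and $x_i\neq p$, with you merely proving the implications in the opposite direction (from the side of $p$ to the label, rather than from the label to the side of $p$) and closing the biconditional by exclusivity of the two labels and the two sides. Your added care about the distinct-terms hypothesis of Proposition \ref{wall} in the eventually-$p$ case is a detail the paper's proof passes over silently, but it does not change the substance of the argument.
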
 
\begin{proof}
This follows from Proposition \ref{wall}. Indeed, if $x_i$ is labeled as $L$ then $x_{i+1}<x_i$ and hence $x_n<x_i$ for all $n>i$. Therefore $p\leq x_i$. But $x_i\neq p$. Therefore $x_i>p$. 

Conversely, if possible, $x_i>p$ and $x_i$ is labeled as $R$. Then by our foregoing argument we get $x_n>x_i$ and $p\geq x_i$, which is a contradiction. 

Similarly for the second also.
\end{proof}

We choose the index set $J$ as the union of $\{L,R\}^\mathbb{N}$ and $\{L,R\}^*$. Here $\{L,R\}^\mathbb{N}$ denotes the set of all sequences over $\{L,R\}$ and $\{L,R\}^*$ denotes the set of all words over $\{L,R\}$. Let $\mathbb{P}$ denote the set of all orbit-patterns available in $\mathcal{F}_1$. 

In general, the sequence over $\{L,R\}$ need not determine the orbit pattern. In other words, two different patterns may have the same $L$-$R$-sequence. Here is an example, $((-1)^n\frac{1}{n})$ and $((-1)^nn)$ are having same $L$-$R$-sequence (namely $\overline{RL}$) but with different orbit patterns. This can be better understood as follows: The $L$-$R$-sequence contains only a part of the information that an order-pattern provides. The order-pattern provides the information: For each pair (m,n) of positive integers, which is smaller between the two numbers $x_m$ and $x_n$? The $L$-$R$-sequence provides a part of this information, namely when $n=m +1$. 

We do not expect this partial information to determine the full information. But it happens when we confine our discussion to the orbit patterns available in $\mathcal{F}_1$. To the question: given $m<n$, is $f^m(x)<f^n(x)$ or not?, the answer gets determined as follows: if the $(m+1)$-th term in the $L$-$R$-sequence is $R$ then $f^m(x)<f^n(x)$; if it is $L$, then $f^n(x)<f^m(x)$. Moreover the next theorem shows that $J$ is a natural index set for $\mathbb{P}$.

\begin{thm}\label{index}
There exists a natural bijection $\phi: \mathbb{P}\to J$.
\end{thm}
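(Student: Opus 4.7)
The plan is to define $\phi$ as the map sending each orbit pattern to its $L$-$R$ sequence, exactly as introduced before the theorem: given a representative orbit $(x_n)$ of an $\mathcal{F}_1$-map with limit $p$, label each term $x_i \neq p$ by $L$ if $x_{i+1} < x_i$ and by $R$ if $x_{i+1} > x_i$; if a least index $k$ satisfies $x_k = p$, keep only the first $k$ labels to obtain an element of $\{L,R\}^*$, otherwise take the full infinite word in $\{L,R\}^{\mathbb{N}}$. Well-definedness on order-patterns is immediate, since two sequences of the same order-pattern agree on each comparison $x_i$ vs $x_{i+1}$ and on the first index (if any) at which a term equals the common limit.

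Injectivity is the heart of the matter and rests on Proposition \ref{wall}. Given indices $m < n$, if position $m$ carries label $R$ then $x_{m+1} > x_m$; since every term is a wall, $x_\ell > x_m$ for all $\ell > m$, in particular $x_n > x_m$. The dual argument handles $L$, and the case $x_m = p$ is handled separately using the fact that orbits in $\mathcal{F}_1$ absorb into $p$ once they hit it. Thus every pairwise comparison between terms of the orbit is recovered from the $L$-$R$ sequence, so distinct order-patterns give distinct $L$-$R$ sequences; this makes rigorous the informal argument in the paragraph preceding the theorem.

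For surjectivity, given any $\sigma \in J$ I would realize it as the $L$-$R$ sequence of an orbit of some $\mathcal{F}_1$-map. Fix $p = 0$, set $\epsilon_n = +1$ if $\sigma_n = L$ and $\epsilon_n = -1$ if $\sigma_n = R$, and take $x_n = \epsilon_n 2^{-n}$, terminating the sequence at $0$ if $\sigma$ is a finite word of length $k$. The sequence $(x_n)$ visibly satisfies clauses (1)--(3) of Proposition \ref{wall}, so by clause (4) it does not force a $2$-cycle and is realized as an orbit of some $f \in \mathcal{F}_1$.

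The main obstacle is in that last step: to exhibit $f$ concretely, one declares $f(0) = 0$ and $f(x_n) = x_{n+1}$, but a careless (say piecewise linear) interpolation between the orbit points can introduce a spurious $2$-cycle, so the interpolation on each gap must be arranged so that $f^2$ acquires no fixed point beyond $0$ (for instance, by keeping the graph of $f$ wedged between the diagonal and its reflection across $x=p$). A slicker alternative is to appeal to the forthcoming Theorem~B, whose single map $r x \sin(1/x)$ realizes every $\sigma \in J$ as some orbit's $L$-$R$ sequence and thereby supplies surjectivity in one stroke; the ``naturality'' of $\phi$ is then its compatibility with the shift on $\{L,R\}$-sequences induced by applying $f$.
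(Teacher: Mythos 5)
Your proposal is correct and follows essentially the same route as the paper: the same labelling map $\phi$, the same well-definedness check, injectivity via the wall property of Proposition \ref{wall} recovering every comparison $x_m$ vs.\ $x_n$, and surjectivity by realizing a prescribed $L$-$R$ word on an explicit sequence $\pm 2^{-n}$ (the paper uses $\pm\frac{1}{n}$) followed by an interpolation kept strictly inside the wedge $|f(x)-p|<|x-p|$ to exclude $2$-cycles. The paper carries out exactly the interpolation you describe as the ``fix,'' so your flagged obstacle is handled the same way there.
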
 
\begin{proof}
Let us define $\phi: \mathbb{P}\to J$ by taking $\phi ((x_n))$ as that element of $J$ whose $i$-th term is the labeled of $x_i$ as defined earlier, $\forall i$. The function $\phi$ is well-defined. Indeed if $(a_n)$ and $(b_n)$ are representatives of two orbits have the same order pattern then $\phi$ takes them to the same sequence or word over $\{L,R\}$. This is because, the $n$-th term of $\phi((a_n))$ is $L$ iff $a_n > a_{n+1}$. This happens iff $b_n > b_{n+1}$. This happens if and only if the $n$-th term of $\phi((b_n))$ is also $L$.  It is important to note that the empty word corresponds to the constant orbit i.e., the orbit of a fixed point.

Let $w = w_1w_2w_3...\in J$. Take 

\[
x_1 = 
     \begin{cases}
       1 &\quad\text{if $w_1=L$}\\
       -1 &\quad\text{if $w_1=R$}\\
       0 &\quad\text{if $w_1$ does not exist.}\\
     \end{cases}
\] 

Suppose $x_1$, ... , $x_k$ have been defined. To define $x_{k+1}$:

\[
x_{k+1} = 
     \begin{cases}
       \frac{1}{k+1} &\quad\text{if $w_{k+1}=L$}\\
       -\frac{1}{k+1} &\quad\text{if $w_{k+1}=R$}\\
       0 &\quad\text{if $w_{k+1}$ is not there.}\\
     \end{cases}
\] 

The set $\{x_k:k\in \mathbb{N}\}$ constructed above is a discrete set. If $0\in \{x_k:k\in \mathbb{N}\}$, it is finite. If $0\notin \{x_k:k\in \mathbb{N}\}$ then $\{x_k: k \in \mathbb{N}\}\subset \{\pm \frac{1}{n}:n\in \mathbb{N}\}$. For every element (except at most two at the boundary) there is a next element and previous element. For every pair of adjacent elements say $x_m$ and $x_n$ we join the point $(x_n,x_{n+1})$ and $(x_m,x_{m+1})$ by a line segment in the plane. When this is done for all such pairs and $(0,0)$ is also taken, the graph of a function is ready; let the function be called $f$. 

By the construction of the map $f$, it is clear that $f$ is continuous. Observe that $|f(x)|<|x|$ on $[\text{inf}\; \{x_k\},\text{sup}\; \{x_k\}]$ except 0, where $f(0)=0$. If $p\neq 0$, then $|f^2(p)|<|p|$ i.e., $p$ can not be a periodic point of period $2$. Hence $f\in \mathcal{F}_1$ where $\phi ((x_n))=w$. Hence $\phi$ is surjection. 

Let $\alpha \in J$. Then $\phi ((x_n))=\alpha$ where $(x_n)$ is a representative of a orbit pattern in $\mathcal{F}_1$. For $m<n$ in $\mathbb{N}$, we can decide if $x_m<x_n$ or $x_m=x_n$ or $x_m>x_n$ by use of wall condition. Let $\alpha_m$ be the $m$-th term of $\alpha$. If $\alpha_m$ is $L$, then $x_m>x_n$. If $\alpha_m$ is $R$, then $x_m<x_n$. If $\alpha_m$ is empty, then $x_m=x_n$. Thus the orbit pattern of $(x_n)$ is decided by $\alpha$. This proves: $\phi$ is injection.
\end{proof}

\begin{rmrk}
Enlisting all orbit-patterns available for $\mathcal{F}_1$-maps, serves two other purposes (apart from the fact that it is natural on its own):

1. It paves the way for the proof of the later theorems of this paper.

2. It can be used to choose one representative from each orbit-pattern, from the set $\{\pm \frac{1}{n}: n\in \mathbb{N}\}\cup \{0\}$ as we have done in the proof of Theorem \ref{index}.
\end{rmrk}

\section{Motivating example of a universal function}
The dynamics of a system is said to be completely understood if we can describe all the orbit patterns in it. An excellent monograph about combinatorial orbit patterns for one dimensional maps is \cite{misi}.  In elementary text-books like \cite{holmgren}, examples have been provided, where such a complete understanding is possible. In all those examples, there are only finitely many, or at most countably many order patterns available. It will indeed be nice if the same is achieved for a more complicated example. This is what we do in this section. We are able to describe all the order-patterns available for the interval map $r x \sin \frac{1}{x}$ on $[-1,1]$ for $0<r\leq 1$. 


\begin{thm}\label{motivation}
Let $0<r<1$. Then the function $f(x):= r x \sin \frac{1}{x}$ on $[-1, 1]$ is universal. In other words, for any $\alpha \in J$, $\exists x_\alpha \in [-1, 1]$ whose orbit pattern is $\phi(\alpha)$.
\end{thm}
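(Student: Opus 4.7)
My plan has three parts: first verify that $f \in \mathcal{F}_1$; then prove a short oscillation lemma for $f$ near $0$; and finally assemble the realizing points $x_\alpha$ by pulling back itineraries through Theorem \ref{exact}. Setting $f(0) = 0$ makes $f$ continuous on $[-1,1]$, and $|\sin(1/x)| \le 1$ gives $|f(x)| \le r|x|$; iteration yields $|f^n(x)| \le r^n|x| \to 0$, so every orbit converges to $0$ and any periodic point $x$ of period $m$ satisfies $|x| = |f^{mN}(x)| \le r^{mN}|x|$ for each $N$, forcing $x = 0$. Hence $f \in \mathcal{F}_1$, Proposition \ref{wall} applies to every orbit, and by Theorem \ref{index} it suffices to exhibit, for each $\alpha \in J$, a point whose $L$-$R$-sequence (with respect to the limit $p=0$) is $\alpha$. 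The case $\alpha = \emptyset$ is trivial: take $x_\alpha = 0$.

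The key technical ingredient is an oscillation lemma: for every $\epsilon \in (0,1]$ there exists $\epsilon' \in (0,\epsilon)$ such that $f([\epsilon',\epsilon]) \supseteq [-r\epsilon', r\epsilon']$, with the symmetric statement on the negative side following from oddness of $f$. To prove it I choose $\epsilon'$ small enough that $1/\epsilon' - 1/\epsilon \ge 2\pi$; then $\sin$ attains both $+1$ and $-1$ on $[1/\epsilon, 1/\epsilon']$, so there exist $x^\pm \in [\epsilon', \epsilon]$ with $\sin(1/x^+) = 1$ and $\sin(1/x^-) = -1$. Thus $f(x^+) = rx^+ \ge r\epsilon'$ and $f(x^-) = -rx^- \le -r\epsilon'$, and since $f([\epsilon',\epsilon])$ is a connected subset of $\mathbb{R}$ containing both values, it contains all of $[-r\epsilon', r\epsilon']$.

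Given an infinite $\alpha = \alpha_1 \alpha_2 \ldots$, I use the lemma inductively to select closed intervals $I_n$ of the form $[\epsilon_n', \epsilon_n]$ (or its reflection across $0$), lying strictly on the side of $0$ dictated by $\alpha_n$, and arranged so that $I_{n+1} \subseteq f(I_n)$; such choices exist because each $f(I_n)$ covers a neighborhood $[-r\epsilon_n', r\epsilon_n']$ of $0$. I then build nested compact intervals $T_1 \supseteq T_2 \supseteq \ldots$ via Theorem \ref{exact}: starting from $T_n \subseteq I_1$ with $f^{n-1}(T_n) = I_n$, Theorem \ref{exact} first produces $J \subseteq I_n$ with $f(J) = I_{n+1}$, and then, applied to the continuous map $f^{n-1}$ on $T_n$ (whose image $I_n$ contains $J$), yields $T_{n+1} \subseteq T_n$ with $f^{n-1}(T_{n+1}) = J$, whence $f^n(T_{n+1}) = I_{n+1}$. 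An easy induction shows $f^j(T_n) \subseteq I_{j+1}$ for $j \le n-1$, so the intersection $\bigcap_n T_n$ is nonempty and any $x_\alpha$ in it has $f^j(x_\alpha) \in I_{j+1}$ for every $j \ge 0$, producing exactly the $L$-$R$-sequence $\alpha$.

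For a finite $\alpha = \alpha_1 \ldots \alpha_k$ I stop the construction at $I_k$, obtaining $T_k$ with $f^{k-1}(T_k) = I_k$; since the oscillation lemma gives $0 \in f(I_k)$, some $y \in I_k$ has $f(y) = 0$, and pulling back through $f^{k-1}$ inside $T_k$ produces $x_\alpha \in T_k$ with $f^k(x_\alpha) = 0$ and $f^j(x_\alpha) \in I_{j+1}$ for $j < k$, realizing the word $\alpha$ exactly. I expect the oscillation lemma to be the main technical content, because it is where the specific form of $f(x) = rx\sin(1/x)$ enters; once it is in hand, the itinerary argument is a routine nested-pullback through Theorem \ref{exact}, and the bijection from Theorem \ref{index} does the rest.
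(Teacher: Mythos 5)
Your proposal is correct and follows essentially the same route as the paper: both exploit the oscillation of $\sin\frac{1}{x}$ near $0$ to produce, arbitrarily close to the fixed point on either side, intervals whose images under $f$ cover a one-sided neighbourhood of $0$ on the prescribed side, and then pull back a nested sequence of such intervals through Theorem \ref{exact} (the paper's Steps I--III package this via four one-sided intervals $L_1,\dots,L_4$ anchored at zeros of $f$ rather than your symmetric covering lemma, but the mechanism and the use of Theorem \ref{index} are the same). One small correction: $f(x)=rx\sin\frac{1}{x}$ is \emph{even}, not odd, so the ``symmetric statement on the negative side'' follows from $f(-x)=f(x)$ --- which in fact gives the identical image set, so your conclusion stands unchanged.
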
  
\begin{proof}
It is easy to see that $f(x)=r x \sin \frac{1}{x}$ for $0<r<1$ is a first Sharkovskii's type map. Note that $0$ is the only fixed point. We will prove this theorem in three steps.

\noindent \textbf{Step-I}: Let $\delta > 0$. Then there exist four closed intervals $L_1$, $L_2$, $L_3$, $L_4$ such that:

$L_1$ and $L_2$ are inside the open interval $(0, \delta)$,

$L_3$ and $L_4$ are inside the open interval $(- \delta, 0)$,

$f(L_1)$ and $f(L_3)$ are of the form $[0, \eta]$,

$f(L_2)$ and $f(L_4)$ are of the form $[-\eta,0]$ for some $\eta>0$. Indeed for a given $\delta>0$ choose $a\in (0,\delta)$ such that $f(a)=0$ and $f$ is increasing at $a$. Choose $b\in (a,\delta)$ such that $f$ is increasing on $(a,b)$. Take $L_1=[a,b]$. Then $f(L_1)=[0,f(b)]$ as we want. Similarly $L_2$, $L_3$, $L_4$ are chosen with corresponding modifications. Observe that $L_1$, $L_2$, $L_3$ and $L_4$ do not contain $0$.

\noindent \textbf{Step-II}: Let $s=(s_n)$ be in $\{L,R\}^\mathbb{N}$. Define
\[   
I_1 = 
     \begin{cases}
       [\frac{1}{\pi}-\delta_1,\frac{1}{\pi}+\delta_1] &\quad\text{if $s_1=L$}\\
       [-\frac{1}{\pi}-\delta_1,-\frac{1}{\pi}+\delta_1] &\quad\text{if $s_1=R$}\\
     \end{cases}
\]
for a small enough $\delta_1>0$ such that $f$ is monotone on $I_1$. Observe that $f(I_1)$ is an interval containing $0$ as an interior point.

By Step-I, we choose a closed interval $J_1\subset f(I_1)$ not containing $0$ such that $f(J_1)$ is equal either $[0,\delta_2]$ or $[-\delta_2,0]$ for some $\delta_2>0$. In fact, if $s_2=L$, we choose $J_1\subset (0,\delta_1)\cap f(I_1)$, if $s_2=R$, we choose $J_1\subset (-\delta_1,0)\cap f(I_1)$.  By Theorem \ref{exact}, $\exists I_2\subset I_1$ such that $f(I_2)=J_1$. 

Suppose we have constructed $I_1\supset I_2\supset ... \supset I_k$ such that $f^j(I_j)$ is equal to $[0,\delta_{j}]$ or $[-\delta_{j}, 0]$ for some $\delta_j>0$ where $2\leq j\leq k$. To construct $I_{k+1}$, first we take a closed interval $J_{k}\subset f^k(I_k)$ not containing $0$ such that  $f(J_{k})$ is equal to either $[0,\delta_{k+1}]$ or $[-\delta_{k+1}, 0]$ for some $\delta_{k+1}>0$. In fact, if $s_{k+1}=L$, we choose $J_k\subset (0,\delta_{k})\cap f^k(I_k)$, if $s_{k+1}=R$, we choose $J_k\subset (-\delta_{k},0)\cap f^k(I_k)$. By Theorem \ref{exact}, $\exists I_{k+1}\subset I_k$ such that $f^k(I_{k+1})=J_k$.

Thus we have recursively constructed a nest of intervals $I_1\supset I_2\supset ... \supset I_n \supset ...$. Using NIT (Nested Interval Theorem), find an element $x$ common to all these $I_n$'s. Observe that for all $x\in I_1$, the orbit pattern-tag of $x$ starts with $s_1$. For all points $x\in I_2$, the orbit pattern-tag of $x$ starts with $s_1s_2$. More generally for all $n\in \mathbb{N}$, $\forall x\in I_n$, the orbit pattern-tag starts with $s_1s_2...s_n$. Here the orbit pattern tag of $x$ means $\phi (\alpha)$ where $\alpha$ is the orbit pattern of $(f^n(x))$. Therefore the orbit pattern tag of $x$ is nothing but the given sequence $s=(s_n)$.

\noindent \textbf{Step-III}: Let $w$ be a word of length $n$ over $\{L,R\}$. We will show that there exist an element $x_1$ in $[-1,1]$ such that $\phi ((x_n))=w$ where $x_n=f(x_{n-1})$ for $n\geq 2$. If $w$ is a word of length 1, then choose $x_1=\frac{1}{\pi}$ for $w=L$ or $x_1=-\frac{1}{\pi}$ for $w=R$. Assume $n\geq2$. Construct $J_1$, $J_2$,..., $J_{n-1}$ as in the previous. Note that $f(J_1)\supset J_2$, $f(J_2)\supset J_3$ ..., $f(J_{n-2})\supset J_{n-1}$. As $f(J_m)$ always contain $0$ for every $m$, choose $y_{n}\in J_{n-1}$ such that $f(y_{n})=0$. Choose $f(y_{j-1})=y_{j}$ and $y_j\in J_{j-1}$ for $2\leq j\leq n$. Again since $J_1\subset f(I_1)$, then $\exists y_1\in I_1$ such that $f(y_1)=y_2$. This $y_1$ has its orbit pattern represented by the given word $w$.
\end{proof}

\begin{rmrk}
One can prove that $x \sin \frac{1}{x}$ is also universal in $\mathcal{F}_1$ by a careful choice of $\delta_n's$ and $J_n's$ such that no $J_n$ contains a fixed point. Moreover $x sin \frac{1}{x}$ and $r x \sin \frac{1}{x}$ have the same collection of orbit patterns. One can use Theorem \ref{charact} to prove this (see Remark \ref{sin}).
\end{rmrk}

Let us denote by $\mathcal{C}$ the set of all contraction maps on $I$. We next prove that the interesting fact that $\mathcal{C}$ and $\mathcal{F}_1$ admit the same collection of orbit patterns. Moreover we have the following consequence.

\begin{corr}\label{cont}
There exists a universal function in the class $\mathcal{C}$.
\end{corr}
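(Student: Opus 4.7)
The plan is to invoke Theorem C and then exhibit a concrete contraction on $[-1,1]$ whose unique fixed point satisfies the swinging hypothesis of that theorem.

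First, every contraction on a closed interval has a unique fixed point $p$ by Banach's contraction principle, and every orbit converges to $p$; in particular, a contraction admits no cycle other than $\{p\}$, so $\mathcal{C}\subseteq \mathcal{F}_1$. Consequently the set of orbit-patterns realised by maps in $\mathcal{C}$ is contained in the set of orbit-patterns realised by maps in $\mathcal{F}_1$, and any $f\in \mathcal{C}$ that is universal in $\mathcal{F}_1$ is automatically universal in $\mathcal{C}$.

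It therefore suffices to produce one $f\in \mathcal{C}$ with the swinging property at some fixed point. Staying within the sine-curve family that is featured throughout the paper, I would take the sister $f(x)=r x^{2}\sin(1/x)$ for $x\neq 0$, $f(0)=0$, with $0<r<1/3$. A routine computation gives $f'(0)=0$ and $|f'(x)|\le 2r|x|+r\le 3r<1$ for every $x\in[-1,1]\setminus\{0\}$, so $f$ is Lipschitz with constant $3r<1$ and sends $[-1,1]$ into $[-r,r]\subseteq[-1,1]$, hence $f\in\mathcal{C}$. The only fixed point is $p=0$, and since $\sin(1/x)$ changes sign infinitely often as $x\to 0^{\pm}$, the map $f$ takes both positive and negative values in every neighbourhood of $0$ on each side. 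Theorem C then delivers universality of $f$ in $\mathcal{F}_1$, and by the previous paragraph $f$ is universal in $\mathcal{C}$.

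The single delicate point is the Lipschitz bound: the factor $x^{2}$ is exactly what tames the unbounded derivative of $\sin(1/x)$ present in the motivating example of Theorem B, and taking $r<1/3$ pushes the Lipschitz constant strictly below $1$ without disturbing the sign oscillations near $0$. An alternative realisation, if one prefers to avoid calculus, would be a piecewise linear odd function through the nodes $\pm 1/3^{n}$ with values $\pm(-1)^{n}/3^{n+1}$, whose segment slopes are all equal to $\pm 2/3$; the rest is bookkeeping.
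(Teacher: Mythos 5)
Your proof is correct, and its skeleton matches the paper's: first the inclusion $\mathcal{C}\subseteq\mathcal{F}_1$, then a concrete contraction from the $x^{k}\sin\frac{1}{x}$ family. The differences are in the details of both steps. For the inclusion, the paper argues directly that a $2$-cycle $\{p,q\}$ would force $|f(p)-f(q)|=|p-q|$, contradicting contractivity; your route through Banach's fixed point theorem is also valid but heavier than needed. For the exhibited map, the paper takes $\frac{x^{3}}{5}\sin\frac{1}{x}$ (with $|f'|\le \frac45$) and certifies universality by saying the nested-interval construction of Theorem \ref{motivation} goes through for it, whereas you take $rx^{2}\sin\frac{1}{x}$ with $r<\frac13$ and instead invoke the characterization of Theorem \ref{charact}: the sign oscillation of $\sin\frac{1}{x}$ on both sides of $0$ gives the required swinging sequence at the fixed point. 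Your appeal to Theorem \ref{charact} is not circular (its proof does not use this corollary) and arguably gives a cleaner finish, since one only has to check a local sign condition rather than re-run the interval-covering argument; the paper's route keeps the corollary self-contained within Section 3. Your Lipschitz estimate $|f'(x)|\le 2r|x|+r\le 3r<1$ is correct, as is the piecewise linear alternative with slopes $\pm\frac23$.
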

\begin{proof}
In this proof, with out loss of generality we assume the domain $I=[-1,1]$. Let $f\in \mathcal{C}$. If possible, let $f$ admit a $2$-cycle $\{p,q\}$ so that $f(p)=q$, $f(q)=p$. Then $|f(p)-f(q)|=|p-q|$, which is a contradiction to the fact that $f$ is contraction. Hence $f\in \mathcal{F}_1$. Hence $\{\text{All orbit patterns available in $\mathcal{C}$}\}$ $\subset$ $\{\text{All orbit patterns available for $\mathcal{F}_1$}\}$. Take $f(x)=\frac{x^3}{5}\sin \frac{1}{x}$ on $[-1,1]$. Since $|f'(x)|\leq \frac{4}{5}$, $f$ is contraction. A similar approach as Theorem \ref{motivation} can be used to prove that it contains all the orbit patterns available in $\mathcal{F}_1$ on $I$. Therefore $\frac{x^3}{5} \sin \frac{1}{x}$ is universal for $\mathcal{C}$.
\end{proof}

\begin{rmrk}
For any $0<r<1$, $r x \sin \frac{1}{x}$ is not a contraction, so directly we are unable to use it to prove Corollary \ref{cont}.
\end{rmrk}

\begin{rmrk}
Our motivating example in this section serves two other purposes:

1. In some sense $r x \sin \frac{1}{x}$ for some $0<r<1$ is the simplest example in $\mathcal{F}_1$ that is universal for $\mathcal{F}_1$.

2. The proof there gives a glimpse of the more  complicated proof of the next section.
\end{rmrk}
\section{Characterization of Universal functions}
Why should we give two proofs, one for the particular case of $r x \sin \frac{1}{x}$ for some $0<r<1$ and again for the general case? Can we not omit the proof in Section 3? We have included both because: 
The first proof gives the basic idea in the simplest case. It is easier. It needs refinement in the general proof. There are two main difficulties in the general proof (that were not encountered in the earlier proof). First, the set of points that go to the fixed point can have several limit points; that is, infinite fluctuations can happen at several points. Second, the graph of the function may cross the diagonal often, spoiling our easier method. This necessitates more care on the choice of $J_n$'s. The end point of $J_n$ that is in the pre-image of the fixed point, is chosen carefully so that it is isolated from the required side. (In the notation of the proof, there is no point strictly between $b_n$ and $a_n$ that goes to $p$.) 
Next, for all points in $J_n$, the motion under $f$ is unilateral. This is guaranteed by keeping $J_n$ inside $\mathcal{S}$ (defined in the proof). These requirements are not needed in the simpler case; thus the particular proof also deserves to be grasped separately.

\begin{thm}\label{charact}
A function $f\in \mathcal{F}_1$ is universal if and only if $\exists$ a sequence $(a_n)$ converging to a fixed point $p$ such that 

$a_n>p$ if $n$ is even, 

$a_n<p$ if $n$ is odd, 

$f(a_n)>p$ if $n$ or $n-1$ is multiple of four  and 

$f(a_n)<p$ if $n-2$ or $n-3$ is multiple of four.
\end{thm}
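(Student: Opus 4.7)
Assume $f$ is universal. Fix $s = (s_n) \in \{L,R\}^{\mathbb{N}}$ in which each of the four digrams $LL, LR, RL, RR$ appears infinitely often. By Theorem \ref{index} this $s$ tags some orbit-pattern in $\mathbb{P}$, and by universality some orbit $(x_n) = (f^n(x_0))$ of $f$ realises it, converging to a fixed point $p$. Proposition \ref{sign} gives $x_n > p$ iff $s_n = L$, so each occurrence of $LL$ at position $n$ supplies $x_n > p$ with $f(x_n) = x_{n+1} > p$, and similarly for the other three digrams. Since $x_n \to p$ and each digram recurs infinitely often, this yields four subsequences of $(x_n)$ converging to $p$ and realising the four sign combinations $(+,+), (-,+), (+,-), (-,-)$; interleaving them in the cyclic order dictated by $n \bmod 4$ produces the required sequence $(a_n)$.

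\textbf{Sufficiency.} Assume such $(a_n)$ exists. I adapt the three-step scheme of Theorem \ref{motivation}. Step I (basic intervals): for every $\delta > 0$, I construct four closed intervals $L_1, L_2 \subset (p, p+\delta)$ and $L_3, L_4 \subset (p-\delta, p)$ with $f(L_1) = f(L_3) = [p, p+\eta]$ and $f(L_2) = f(L_4) = [p-\eta, p]$ for some $\eta > 0$. For $L_1$: pick $i = 4k, j = 4k+2$ large enough that $a_i, a_j \in (p, p+\delta)$; then $f(a_i) > p$ and $f(a_j) < p$. With $a_i < a_j$ WLOG, set $b = \inf\{x \in [a_i, a_j] : f(x) \leq p\}$. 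Continuity gives $f(b) = p$ and $f > p$ on $[a_i, b)$, so $f([a_i, b]) = [p, M]$ with $M > p$, and Theorem \ref{exact} supplies $L_1 \subset [a_i, b]$ with $f(L_1) = [p, p+\eta]$. The other three $L_i$ arise similarly by drawing on the other residue classes of $(a_n)$. Step II (infinite words): for $s \in \{L,R\}^{\mathbb{N}}$, build nested closed intervals $I_1 \supseteq I_2 \supseteq \cdots$ with $f^{j-1}(I_k) \subseteq s_j$-side of $p$ for $j \leq k$ and $f^k(I_k)$ of the form $[p, \eta_k]$ or $[p-\eta_k, p]$. At stage $k$, pick $J_k \subset f^k(I_k)$ as a Step-I interval whose side equals $s_{k+1}$ and whose image-side equals $s_{k+2}$; pull back through Theorem \ref{exact} to $I_{k+1} \subset I_k$ with $f^k(I_{k+1}) = J_k$. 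Any point in $\bigcap_k I_k$ (Nested Interval Theorem) realises the orbit-pattern with tag $s$. Step III (finite words): for a word $w$ of length $n$, truncate the recursion at stage $n-1$ and choose $y_n \in J_{n-1}$ with $f(y_n) = p$, which exists since $p \in f(J_{n-1})$.

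\textbf{Main obstacle.} The technical core is Step I. In contrast to $r x \sin \frac{1}{x}$ of Theorem \ref{motivation}, a general $f \in \mathcal{F}_1$ may have $f^{-1}(p)$ accumulating everywhere near $p$, and its graph may cross the diagonal arbitrarily often; a careless choice of $L_i$ may then contain an interior point where $f = p$ (prematurely collapsing any orbit through it) or straddle further fixed points that spoil the wall structure needed for the recursion. The remedy is twofold. First, the endpoint of $L_i$ lying in $f^{-1}(p)$ must be chosen as an infimum/supremum of the crossing set from the ``correct'' side (the $b$ above), so that no interior point of $L_i$ maps to $p$ -- the ``isolated endpoint'' condition foreshadowed in the preamble. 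Second, $L_i$ must be kept inside the open set $\mathcal{S} = \{x : f(x) - p \text{ has the prescribed sign}\}$, ensuring the motion of $f$ on $L_i$ is unilateral. Once Step I is executed with this care, the Step-II recursion and Nested Interval Theorem close the argument parallel to Theorem \ref{motivation}.
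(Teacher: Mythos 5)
Your overall architecture matches the paper's: the necessity direction extracts the four sign-combinations from a realized pattern containing all four digrams (the paper simply uses the single pattern $\overline{RRLL}$ and relabels its iterates), and the sufficiency direction runs the nested-interval scheme of Theorem \ref{motivation} with basic intervals anchored at preimages of $p$ lying arbitrarily close to $p$ on both sides, obtained exactly as the paper's $b_n$ (a supremum/infimum of the crossing set between $a_i$ and $p$). Steps I and III are sound.

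The genuine gap is in the safeguard you call $\mathcal{S}$. You define $\mathcal{S}$ by the \emph{sign} of $f(x)-p$, but controlling the signs of $f^k(x)-p$ for every $k$ does not determine the orbit's tag. The tag records whether $f^{k+1}(x)<f^k(x)$ or not; when two consecutive prescribed symbols coincide (say $s_{k+1}=s_{k+2}=L$, so both iterates lie above $p$), the sign information says nothing about which of the two iterates is larger, and the orbit could move \emph{away} from $p$ at that step (creeping upward toward another fixed point $q>p$), in which case its true label there is $R$, not $L$, and the point you construct realizes the wrong pattern. (For words, and for sequences with infinitely many sign changes, the wall property rescues you; for eventually constant $s$ such as $\overline{L}$ it does not.) The paper's $\mathcal{S}$ is $\{x: |f(x)-p|<|x-p|\}$, the \emph{contraction} condition, which is precisely what forces a point above $p$ to move left and a point below $p$ to move right regardless of where its image lands; each $b_n$ (your $b$) is an interior point of this set because $f(b_n)=p$, so the Step-I intervals can be shrunk into it. Alternatively you could insist that the lengths $\eta_k\to 0$, so that the constructed orbit converges to $p$ and Proposition \ref{sign} applies directly; but as written, neither safeguard is in place, and the sign-based $\mathcal{S}$ does not close the argument.
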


\begin{proof}
First we will prove the reverse implication i.e., the conditions stated in the theorem is sufficient for a function to be universal. This part of the proof has two parts namely the preparatory part and the recursive definition part. 

In the preparatory part, first we define $b_n$ and and study some essential properties of $b_n$ (as Fact-I and Fact-II) which are useful in our proof. 

Let us define $b_n$ by
\[ b_n:= 
     \begin{cases}
       \text{sup} \; \{x: f(x)=p \; \text{and} \; x<a_n\} &\quad\text{where $a_n>p$}\\
       \text{inf}\; \{x: f(x)=p \; \text{and} \; x>a_n\} &\quad\text{where $a_n<p$}\\
     \end{cases}
\]

\noindent \textbf{Fact-I}: No other points between $b_n$ and $a_n$ go to $p$. Here $f(b_n)=p$ but no $b_n$ is equal to $p$. And both $a_n\to p$, $b_n\to p$. Therefore there is an interval with $b_n$ as an end point whose image is an interval with $p$ as an end point. Moreover for any $\delta>0$, $\exists$ $n\in \mathbb{N}$ and $0<\epsilon <\delta$ such that

$b_{4n} \in [p,p+\delta]$ and $f(L^n_0)\supset [p,p+\eta]$ where $L^n_0=[b_{4n},b_{4n}+\epsilon]$; 

$b_{4n+1}\in [p-\delta,p]$ and $f(L^n_1)\supset [p,p+\eta]$ where $L^n_1=[b_{4n+1}-\epsilon,b_{4n+1}]$; 

$b_{4n+2}\in [p,p+\delta]$ and $f(L^n_2)\supset [p-\eta,p]$ where $L^n_2=[b_{4n+2},b_{4n+2}+\epsilon]$; 

$b_{4n+3}\in [p-\delta,p]$ and $f(L^n_3)\supset [p-\eta, p]$ where $L^n_3=[b_{4n+3}-\epsilon,b_{4n+3}]$ 
for some $\eta>0$. 

\noindent \textbf{Fact-II}: Define $\mathcal{S}:=\{x\in I: |f(x)-p|<|x-p|\}$. Observe that $b_n\in \mathcal{S}$, $\forall n\in \mathbb{N}$. Moreover each $b_n$ is an interior point of $\mathcal{S}$. Therefore for suitable $\eta>0$, we can choose $L^n_0,L^n_1,L^n_2,L^n_3$ contained in $\mathcal{S}$ such that Fact-I holds.

Let $s=(s_n)$ be any sequence over $\{L,R\}$. We will show that $\exists$ some element $x$ in the domain of $f$ such that orbit pattern tag of $x$ is $s$.

In the recursive definition part, as a base step first observe that $a_1<p$ and $f(a_1)>p$, $a_2>p$ and $f(a_2)<p$, $a_3<p$ and $f(a_3)<p$, $a_4>p$ and $f(a_4)>p$.

If $s_1=L$ we will search for an element in the right side of $p$ and if $s_1=R$ we will search for an element in the left side of $p$ (because of Proposition \ref{sign}). We choose $I_1$ such that 
\[   
I_1 = 
     \begin{cases}
       [b_{2}, b_{2}+\delta_1] \; \text{or} \; [b_4,b_4+\delta_1]  &\quad\text{if $s_1=L$}\\
       [b_{1}-\delta_1,b_{1}] \; \text{or} \; [b_3-\delta_1,b_3]  &\quad\text{if $s_1=R$}\\
     \end{cases}
\]
for a small enough $\delta_1>0$ so that $f(I_1)$ is an interval with $p$ as an end point and $|f(x)-p|<|x-p|$ for all $x\in I_1$. Note that whether we will chose $b_2$ or $b_4$ or $b_1$ or $b_3$, that will be decided by $s_2$.  

Choose $n_1$ large enough such that $|a_n-p|<\delta_1$ for all $n\geq n_1$. Then choose $m_1>n_1$ such that 

$m_1$ is a multiple of $4$ if $s_2=L=s_3$, 

$m_1$ is of the form $4k+1$ if $s_2=R$ and $s_3=L$, 

$m_1$ is of the form $4k+2$ if $s_2=L$ and $s_3=R$, 

$m_1$ is of the form $4k+3$ if $s_2=R=s_3$. 

By Fact-I and Fact-II we can choose an interval $J_1\subset f(I_1)$ with $b_{m_1}$ as an end point such that $f(J_1)$ is equal to $[p-\delta_2,p]$ or $[p,p+\delta_2]$ for some $\delta_2>0$ and $|f(x)-p|<|x-p|$ for $x\in J_1$. In fact, $J_1\subset (p,p+\delta_1)\cap f(I_1)$ if $s_2=L$ or $J_1\subset (p-\delta_1,p)\cap f(I_1)$ if $s_2=R$. By Theorem \ref{exact}, $\exists I_2\subset I_1$ such that $f(I_2)=J_1$.

Suppose we have constructed $I_1\supset I_2\supset ... \supset I_k$ such that $f^j(I_j)$ is equal to $[p,p+\delta_{j}]$ or $[p-\delta_{j},p] $ for some $\delta_j>0$ where $2\leq j \leq k$. 

To construct $I_{k+1}$: We make a succession of choices as under:

First choose $n_k\in \mathbb{N}$ such that $|a_{n}-p|<\delta_k$ for all $n\geq n_k$.

Next look at $s_{k+1}$ (which may be $L$ or $R$), accordingly choose $m_k>n_k$ so that $b_{m_k}$ is as wanted (After choosing $n_k$, we choose $m_k$. Its parity or rather its conjugacy class modulo 4, is determined by $s_{k+1}$ and $s_{k+2}$.) and choose $J_k\subset f^k(I_k)$ with $b_{m_k}$ as an end point such that
$f(J_{k})$ is equal to $[p,p+\delta_{k+1}]$ or $[p-\delta_{k+1}, p]$ for some $\delta_{k+1}>0$ where $|f(x)-p|<|x-p|$ for all $x\in J_k$. In fact, if $s_{k+1}=L$, we choose $J_k\subset (p,p+\delta_k)\cap f^k(I_k)$, if $s_{k+1}=R$, we choose $J_k\subset (p-\delta_k,p)\cap f^k(I_k)$. 

Lastly take $I_{k+1}\subset I_k$ such that $f^k(I_{k+1})=J_{k}$ (by Theorem \ref{exact}).

Thus we have recursively constructed a nest of intervals $I_1\supset I_2\supset ... \supset I_n \supset ...$. Using NIT, we find that $\cap_{n=1}^{\infty}I_n\neq \emptyset$. By analogous method (used in Theorem \ref{motivation}), the orbit-pattern-tag of $x\in \cap_{n=1}^{\infty}I_n$ is nothing but the given sequence $s=(s_n)$.

Let $w$ be a word of length $n$ over $\{L,R\}$. We will show that there exists an element $x_1$  such that $\phi ((x_n))=w$ where $x_n=f(x_{n-1})$ for $n\geq 2$. As above, construct an interval $I_w$ such that every element in it has its orbit tag starting with $w$. Lastly since $f(I_w)=J_{n-1}$, there is $x$ in $I_w$ whose image in $J_n$ goes to $p$. The orbit pattern tag of this $x$ is $w$.

Conversely, since $f\in \mathcal{F}_1$ is universal, there exists $x$ in $I$ with the orbit pattern $\overline{RRLL}$. Let $p$ be the limit point of $(f^n(x))$. Take $a_{4k+3}=f^{4k}(x)$, $a_{4k+1}=f^{4k+1}(x)$, $a_{4k}=f^{4k+2}(x)$ and $a_{4k+2}=f^{4k+3}(x)$. The sequence $(a_n)$ is the desired sequence.
\end{proof}

\begin{rmrk}\label{sin}
To see that Theorem \ref{motivation} is a particular case of Theorem \ref{charact}, we take 
\[   
a_n = 
     \begin{cases}
       \frac{1}{2n\pi + \frac{\pi}{2}} &\quad\text{if $n$ is of the form $4k$},\\
       \frac{1}{2n\pi - \frac{\pi}{2}} &\quad\text{if $n$ is of the form $4k+2$},\\
       -\frac{1}{2n\pi + \frac{\pi}{2}} &\quad\text{if $n$ is of the form $4k+3$},\\
       -\frac{1}{2n\pi - \frac{\pi}{2}} &\quad\text{if $n$ is of the form $4k+1$}.\\
     \end{cases}
\]
Then $a_n\to 0$ and $a_n \sin \frac{1}{a_n}$ is $>0$ if $n$ or $n-1$ is multiple of four and $<0$ if $n-2$ or $n-3$ is multiple of four. In particular this proves that $x \sin \frac{1}{x}$ is also universal in $\mathcal{F}_1$. 
\end{rmrk}

\begin{rmrk}
Our characterization of universal functions serves two other purposes also.

1. It throws further light on the forcing relation on the orbit-patterns. It is no longer a partial order (though it was partial order in some other instances \cite{bald}, \cite{kannan}).

2. It shows that polynomial interval maps in $\mathcal{F}_1$ admit only countably many order-patterns. 
\end{rmrk}
\begin{prop}
Let $f$ in $\mathcal{F}_1$ admit a fixed point $p$, say. There are five more equivalent conditions for $f$ to be universal: 

i) For every $\epsilon > 0$, there is $\delta > 0$ such that both $f ( [ p,p+ \epsilon])$ and $f([p - \epsilon, p])$ contain $[p-\delta, p+\delta]$. 

ii) Arbitrarily near $p$, on either side of $p$, both values $> p$ and values $< p$ are taken. 

iii) For every $r>0$, $inf \; f([p,p+r])<p$, $sup \; f([p,p+r])>p$; $inf \; f([p-r,p])<p$ and $sup \; f([p-r,p])>p$. 

iv) $p$ is in the closure of these four sets namely, $f^{-1}((p,1])\cap (p,1]$, $f^{-1}((p,1]) \cap [0,p)$, $f^{-1}([0,p))\cap [0,p)$, $f^{-1}([0,p))\cap (p,1]$. 

v) $p$ is a limit point of (each of) some four sequences $(a_n)$, $(b_n)$, $(c_n)$ and $(d_n)$, where for every $n$, the four numbers $a_n$, $c_n$, $f(a_n)$ and $f(b_n)$ are $> p$ and the other four numbers $b_n$, $d_n$, $f(c_n)$ and $f(d_n)$ are $<p$. 
\end{prop}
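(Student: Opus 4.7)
The plan is to treat conditions (ii), (iii), (iv) and (v) as four essentially equivalent reformulations of one geometric statement -- that arbitrarily near $p$, on either side of $p$, $f$ attains both values greater than $p$ and values less than $p$ -- and then to connect this statement to universality by direct appeal to Theorem \ref{charact}. Condition (i) will require one extra step, using the intermediate value theorem, to upgrade ``both signs are attained near $p$'' to ``a whole neighborhood of $p$ is covered''.

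First I would dispatch the trivial equivalences (ii) $\Leftrightarrow$ (iii) $\Leftrightarrow$ (iv) $\Leftrightarrow$ (v). Clause (iii) is just (ii) rewritten in terms of infima and suprema over $[p-r, p]$ and $[p, p+r]$. For (iv), the point $p$ lies in the closure of $f^{-1}((p,1]) \cap (p,1]$ exactly when arbitrarily close to $p$ on the right there exists $x$ with $f(x) > p$, and likewise for the other three sets; the four sets enumerate the four sign-combinations of $x-p$ and $f(x)-p$, so their conjunction is (ii). For (v), each of the four sequences $(a_n), (b_n), (c_n), (d_n)$ realises one of these four sign-combinations, and ``$p$ is a limit point of each'' is precisely the quantitative form of (ii).

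Next I would prove (i) $\Leftrightarrow$ (ii). The direction (i) $\Rightarrow$ (ii) is immediate, since $f([p,p+\epsilon]) \supseteq [p-\delta, p+\delta]$ forces $f$ to take values on both sides of $p$ on $[p, p+\epsilon]$, and analogously on the left. For the converse, given $\epsilon > 0$, I would use (ii) to pick $x_1, x_2 \in [p, p+\epsilon]$ with $f(x_1) = p+\alpha$ and $f(x_2) = p-\beta$ for some $\alpha, \beta > 0$; applying IVT to $f$ on the subinterval joining $x_1$ and $x_2$ gives $f([p, p+\epsilon]) \supseteq [p-\beta, p+\alpha]$, and a symmetric argument on the left produces analogous bounds, after which $\delta$ is chosen as the minimum of the two resulting radii.

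Finally, the equivalence of (v) with universality follows almost tautologically from Theorem \ref{charact}: splitting its characterising sequence $(a_n)$ by residue modulo $4$ yields four sequences whose sign-patterns on $x - p$ and $f(x) - p$ are exactly those demanded in (v). Conversely, four sequences as in (v), thinned to subsequences converging to $p$ sufficiently fast and then interleaved into a single sequence $(\alpha_n)$ with $\alpha_{4k}, \alpha_{4k+1}, \alpha_{4k+2}, \alpha_{4k+3}$ drawn respectively from $(a_n), (b_n), (c_n), (d_n)$, produce a sequence meeting the hypotheses of Theorem \ref{charact}. The one point that needs care anywhere in the argument is aligning the mod-$4$ sign conventions of Theorem \ref{charact} with the four sign-patterns of (v); once that bookkeeping is set up, nothing further is required beyond IVT and Theorem \ref{charact}.
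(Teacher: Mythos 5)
The paper states this proposition without proof, so there is no argument of its own to compare against; your proposal is correct and is clearly the intended one: (ii)--(v) are routine rewordings of the ``swinging'' condition of Theorem~C(b), (i) follows from (ii) by the intermediate value theorem, and the link to universality is exactly the mod-$4$ sign bookkeeping that matches your four sequences $(a_n),(b_n),(c_n),(d_n)$ to the residue classes of the sequence in Theorem~\ref{charact}. The one caveat worth recording is that the conditions must be read as existential over fixed points $p$ (as in Theorem~C(b)): for a universal $f$, Theorem~\ref{charact} supplies a sequence converging to \emph{some} fixed point, not necessarily a prescribed one, so ``universal $\Rightarrow$ (v)'' holds for a suitable $p$ rather than for every fixed point.
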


\section*{Acknowledgement(s)}
The second author acknowledges NBHM-DAE (Government of India) for financial
support (Ref. No. 2/39(2)/2016/NBHM/R \& D-II/11397). This work was done during the second author's visit to SRM University-AP.

\end{document}